\title{On the linearizability of 3-webs: end of controversy}
\author{Zolt\'an Muzsnay}
\date{December 10, 2017}
\newcommand{\W}{{\mathcal W}_{\scriptscriptstyle 0}}
\newtheorem*{theorem*}{Theorem}
\newtheorem*{remark*}{Remark}
\begin{document}

\maketitle

\begin{abstract}  
  There are two theories describing the linearizability of 3-webs: one is
  developed in \cite{GMS} and another in \cite{GL}.  Unfortunately they cannot
  be both correct because on an explicit 3-web $\W$ they \emph{contradict}:
  the first predicts that $\W$ is linearizable while the second states that
  $\W$ is not linearizable.  The essential question beyond this particular
  3-web is: which theory describes correctly the linearizability condition?
  In this paper we present a very short proof, due to J.-P.~Dufour, that $\W$
  is linearizable, confirming the result of \cite{GMS}.  \medskip
\end{abstract}

\medskip

\begin{description}
\item[AMS Classification:] 53A60,
\item[Keywords:] Webs, linearization.
\end{description}

\footnotetext{This work is partially supported by the EFOP-3.6.2-16-2017-00015
  project and by the EFOP-3.6.1-16-2016-00022 project.  The projects have been
  supported by the European Union, co-financed by the European Social Fund.}

\section{The linearizability problem for planar 3-webs}

On a two-dimensional real or complex differentiable manifold $M$ a $3$-web is
given by $3$ foliations of smooth curves in general position.  Two webs
${\mathcal W}$ and $\widetilde{{\mathcal W}}$ are locally equivalent at
$p\in M$, if there exists a local diffeomorphism on a neighborhood of $p$
which exchanges them.  A $3$-web is called \textit{linear} if it is given by
$3$ foliations of straight lines. A web which is equivalent to a linear web is
called linearizable.

\smallskip

\textbf{The linearizability problem}: \emph{Characterize the $3$-webs on real
  or complex $2$-dimen\-si\-onal manifolds which are equivalent, up to a local
  diffeomorphism, to {linear} webs, that is webs such that the corresponding
  foliations are straight lines in a convenient coordinate system.}

\smallskip

Similar to the linearizability is the notion of parallelizability.  A $3$-web
is called \textit{parallelizable} if it is equivalent to $3$ families of
\textit{parallel} lines.  One can remark that for 1- and 2-webs the notion of
linearizability and parallelizability coincide: Because of the inverse function
theorem, any $1$- and $2$-webs are linearizable and also parallelizable.  This
is not true in general: the notion of parallelizability is much stronger than
the linearizability.  A generic $3$-web is non-linearizable, and even if a web
is linearizable, it is in general non-parallelizable.

Basic examples of planar 3-webs comes from complex projective algebraic
geometry. If $\mathcal C \subset \mathbb P^2$ is a not necessarily irreducible
and possibly singular algebraic curve of degree 3 on the projective plane
$\mathbb P^2$, then by duality in the Grassmannian manifold
$\mathbb Gr(1, \mathbb P^2)$ one can obtain a 3-web called the algebraic web
associated with $\mathcal C \subset \mathbb P^2$
(cf.~\cite{Pereira_Pirio_2015}).  Graf and Sauer proved a theorem, which in
web geometry language can be stated as follows: a linear web is parallelizable
if and only if it is associated to an algebraic curve of degree 3, i.e.~its
leaves are tangent lines to an algebraic curve of degree 3 \cite[page 24]{BB}.
This theorem is a special case of N.H.~Abel's classical theorem and its
converse: the general Lie-Darboux-Griffiths theorem \cite{Griffiths}.

Concerning the parallelizability of 3-webs, an elegant coordinate free
characterization can be given in terms of the Chern connection associated: a
$3$-web is parallelizable if and only if the curvature of the Chern
connection, called also Blaschke curvature, vanishes \cite{Chern_1935}.  A new
theoretical set-up of the problem can be found in \cite{Nagy_1988}.

Although the problem of finding a linearizability criterion is a very natural
one, it is far from being trivial.  T.H.~Gronwall conjectured that if a
non-parallelizable $3$-web ${\mathcal W}$ is linearizable, then up to a
projective transformation there is a unique diffeomorphism which maps
${\mathcal W}$ into a linear $3$-web.  G.~Bol suggested a method in
\cite{Bol1} how to find a criterion of linearizability, but he was unable to
carry out the computation.  He showed that the number of projectively
different linear $3$-webs in the plane which are equivalent to a
non-parallelizable $3$-web is finite and less that 17.  The formulation of the
linearizability problem in terms of the Chern connection was suggested by
M.A.~Akivis in a lecture given in Moscow in 1973. In his approach the
linearizability problem is reduced to the solvability of a system of nonlinear
partial differential equations on the components of the affine deformation
tensor. Using Akivis' idea V.V.~Goldberg determined in \cite{Gol2} the first
integrability conditions of the partial differential system.

\section{The controversy}

In 2001 J.~Grifone, Z.~Muzsnay and J.~Saab solved the linearizability problem
by carrying out the computation \cite{GMS}. They showed that, in the
non-parallelizable case, there exists an algebraic submanifold ${\mathcal A}$
of the space of vector valued symmetric tensors ($S^2T^*\otimes T$) on a
neighborhood of any point $p\in M$, expressed in terms of the curvature of the
Chern connection and its covariant derivatives up to order $6$, so that the
affine deformation tensor is a section of $S^2T^*\otimes T$ with values in
${\mathcal A}$.  In particular: the web is linearizable if and only if
${\mathcal A}\neq \emptyset$ and there exists at most $15$ projectively
nonequivalent linearizations of a nonparallelizable 3-web.  The expressions of
the polynomials and their coefficients which define $\mathcal{A}$ can be found
in \cite{GMS_publi}. The criteria of linearizability provides the possibility to
make explicit computation on concrete examples to decide whether or not they
are linearizable.

In 2006 V.V.~Goldberg and V.V.~Lychagin found results on the linearizability
in \cite{GL}. Their results were different from that of \cite{GMS} and they
qualified \cite{GMS} ``incomplete because they do not contain all conditions''
(see \cite[page 171]{GL_cras} and \cite[page 70]{GL}) without pointing out any
missing integrability condition or developing any further justification.

The GMS-approach developed in \cite{GMS} and the GL-approach described in
\cite{GL} \emph{cannot be both correct} because there are cases where the two
theories \emph{contradict}.  

Hence the small but dedicated scientific community working on the problems
related to web geometry is in suspense (see for example \cite[page
2]{Agafonov2015}, \cite[page 2]{Agafonov2017}, or \cite[page 40]{Wang_2012}).
Therefore the focus of this paper is to conclude which theory is describing
correctly the linearizability condition.

\section{Decisive example}

The direct comparison of the two theories is not straightforward, since the
formulas in both cases are long and complex containing the curvature tensor
and its different derivatives.  There is, however, a very specific case, where
the two theories show clearly opposite results. This explicit example of 3-web
was described in \cite{GMS}.  The particular 3-web $\W$ is determined by the web
function $f(x,y):=(x+y)e^{-x}$, i.e.~it is the 3-web given by the foliations
\begin{equation}
  \label{eq:example}
  x=const, \qquad y = const, \qquad (x+y) \,e^{-x}=const,
\end{equation}
on the domain $D:=\{(x,y) \, | \, x+y \neq 1\}\subset \mathbb R^2$.  Using the
GMS--theory one gets that ${\mathcal W}$ is linearizable (page 2653,
\cite{GMS}) while GL--theory states the opposite (page 171, line 7--10,
\cite{GL}).  Evidently, \emph{the correct theory should give a correct answer
  in that specific situation}. In the theorem below we show that the web $\W$
is linearizable, therefore the prediction of GMS--theory is correct.  This
result was obtained in \cite{GMS} but the very short proof is due to
J.-P. Dufour. 
\begin{theorem*}
  \label{thm}
  The 3-web $\W$ defined by the foliations $x=const$, $y = const$ and
  $f(x,y):=(x+y) \,e^{-x}=const$, is linearizable.
\end{theorem*}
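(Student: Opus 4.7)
The plan is to avoid the heavy machinery of the Chern connection entirely and instead exhibit an explicit linearizing local diffeomorphism $\Phi\colon D \to \mathbb{R}^2$. Since the first two foliations $\{x=\text{const}\}$ and $\{y=\text{const}\}$ are already families of straight lines, the natural strategy is to look for a $\Phi$ that controls them easily while straightening the third foliation $\{(x+y)e^{-x}=\text{const}\}$.

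I would try the ansatz $\Phi(x,y) = (X(x,y),y)$, which automatically preserves the horizontal lines of the foliation $\{y=\text{const}\}$ as horizontal lines in the image. To force the third foliation to become a family of vertical lines, take $X = g\circ f$ where $f(x,y) := (x+y)e^{-x}$ is the web function and $g$ is a single-variable function to be determined; then the leaves of $F_3$ automatically map to the vertical lines $\{X = g(c)\}$. The only remaining requirement is that the leaves of $F_1$, namely $\{x=a\}$, also map to straight lines in the $(X,Y)$-plane. Restricted to such a leaf, $f(a,y) = ae^{-a} + e^{-a}y$ is already an affine function of $y$, so the simplest possible choice — $g$ itself affine, e.g.\ $g(t) = 1-t$ — gives $X(a,y)$ affine in $y$, making the image of $\{x=a\}$ a genuine straight line (of slope $-e^{a}$ in the $(X,Y)$-plane).

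This produces the explicit candidate
\[
  \Phi(x,y) = \bigl(1 - (x+y)e^{-x},\; y\bigr).
\]
What remains is mechanical: compute the Jacobian determinant of $\Phi$ (which equals $(x+y-1)e^{-x}$) to confirm that $\Phi$ is a local diffeomorphism exactly on the domain $D = \{x+y\neq 1\}$, and then verify in coordinates that each of the three families is sent to a one-parameter family of straight lines — the vertical lines, the horizontal lines, and an oblique family of slope $-e^{a}$ depending on the leaf label $a$.

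The only real obstacle is guessing the ansatz; once one notices that $f$ happens to be affine in $y$ at each fixed $x$, the simultaneous linearization of all three foliations collapses to a single affine choice for $g$, and there is essentially nothing further to prove.
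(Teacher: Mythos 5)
Your proposal is correct and is essentially the paper's own argument: the paper takes the change of variables $\bar{x}=f(x,y)$, $\bar{y}=y$, and your map $\Phi(x,y)=\bigl(1-(x+y)e^{-x},\,y\bigr)$ differs from it only by the harmless affine reparametrization $t\mapsto 1-t$ in the first coordinate, with the same key observation that $f(a,y)$ is affine in $y$ so the leaves $x=a$ go to straight lines. Nothing further is needed.
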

\begin{proof}
  The change of variable $\bar{x}= f(x, y)$, $\bar{y}= y$ clearly transforms
  the foliations $y = const$ and $f (x, y) = const$ into linear
  foliations. The line $x = c$ of the first foliation becomes the line
  $\bar{x} = (c + \bar{y})e^{-c}$.
\end{proof}
\begin{remark*}
  The statement of the theorem remains true if the function $f(x,y)$ has the
  form
  \begin{math}
    f(x,y)=a(x)x+b(x)y.
  \end{math}
\end{remark*}

\medskip

We note that the linearizability of $\W$ has already been investigated in
\cite{MZ08} from a different point of view: it was showed, using the GMS
approach, that $\W$ is linearizable by proving the existence of the affine
deformation tensor. The lack of presenting the explicit linearization map,
however, could maintain in some way the suspense. Now the suspense is over:
using the Theorem  we can conclude that the prediction of GMS-theory is
correct and the statement of the GL-theory is wrong. One can also conclude
that the criterion of linearizability of \cite{GMS, GMS_publi} provides
effective tools to decide whether or not a 3-web is linearizable.

\bigskip

\textbf{Acknowledgement.} The author is grateful to Professor J.P.~Dufour, who
indicated the linearizing diffeomorphism in a personal communication.  The
author would like to thank the referee for the constructive comments and
recommendations which contributed to improving the paper.

\bigskip

\bigskip

\date{\
  \\
  \textsc{Zolt\'an Muzsnay}, \vspace{1pt}
  \\
  Institute of Mathematics, University of Debrecen,
  \\
  H-4002 Debrecen, Pf. 400, Hungary,
  \\
  \emph{E-mail:} {\tt {}muzsnay@science.unideb.hu}
  \\
  \emph{url:} {\tt http://math.unideb.hu/muzsnay-zoltan/en}}
\end{document}